\tikzset{node distance=2cm, auto}
\newtheorem{theorem}{Theorem}[section]
\newtheorem{proposition}[theorem]{Proposition}
\newtheorem{conjecture}[theorem]{Conjecture}
\theoremstyle{definition}
\newtheorem{definition}[theorem]{Definition}
\newcommand\cO{\mathcal{O}}
\renewcommand\AA{\mathbb{A}}
\newcommand\CC{\mathbb{C}}
\newcommand\PP{\mathbb{P}}
\newcommand\QQ{\mathbb{Q}}
\newcommand\RR{\mathbb{R}}
\newcommand\ZZ{\mathbb{Z}}
\newcommand\rH{\mathrm{H}}
\newcommand\rmd{\mathrm{d}}
\newcommand{\rmi}{\mathrm{i}}
\newcommand{\into}{\hookrightarrow}
\newcommand{\Ext}{\mathrm{Ext}}
\newcommand{\cExt}{\mathcal{E}xt}
\newcommand{\cHom}{\mathcal{H}om}
\DeclareMathOperator{\Spec}{Spec}
\DeclareMathOperator{\rank}{rank}
\def\conv#1{\mathrm{conv} \left\{ #1  \right\} } 
\def\vectortwo#1#2{\begin{pmatrix}
#1 \\ #2
\end{pmatrix}}
\def\vector#1#2#3{\begin{pmatrix}
#1 \\ #2 \\ #3
\end{pmatrix}}
\def\pow#1{[ \! [ #1 ] \! ] }
\def\Def{\mathit{Def}}
\newcommand{\dP}{\mathrm{dP}}
\title[An example of Mirror Symmetry for Fano threefolds]{An example of Mirror Symmetry \\  for Fano threefolds}
\author{Andrea Petracci}
\address{Institut f\"ur Mathematik, Freie Universit\"at Berlin, Arnimallee 3, 14195 Berlin, Germany}
\email{andrea.petracci@fu-berlin.de}
\begin{document}

\begin{abstract}
In this note we illustrate the Fanosearch programme of Coates, Corti, Galkin, Golyshev, and Kasprzyk in the example of the anticanonical cone over the smooth del Pezzo surface of degree 6.
\end{abstract}

\maketitle 

\section{Introduction}

\subsection{Aim}

The Fanosearch programme of Coates, Corti, Galkin, Golyshev, and Kasprzyk \cite{mirror_symmetry_and_fano_manifolds} studies Fano varieties via Mirror Symmetry. In this context it is crucial to study toric degenerations of smooth Fano varieties, or conversely smoothings of toric Fano varieties.
Toric Fano varieties are associated to certain lattice polytopes, called Fano polytopes; some combinatorial input on a Fano polytope conjecturally allows to construct a deformation of the corresponding toric Fano. This is also reflected by Mirror Symmetry, where the combinatorial input is encoded by certain special Laurent polynomials.
The goal of this note is to illustrate this programme in a specific example where two different combinatorial inputs on the same polytope produce two different smoothings of the same toric Fano threefold.

\subsection{The example}

The example we consider is the projective cone over the anticanonical embedding of the smooth del Pezzo surface of degree $6$. This threefold, denoted by $X$, is a toric Fano and has an isolated Gorenstein canonical non-terminal singularity at the vertex of the cone.
The deformations of this singularity have been studied by 
Altmann \cite{altmann_versal_deformation}; we will recall Altmann's results in \S\ref{sec:minkowski_deformations_U}.
In \S\ref{sec:two_smoothings_of_X} we will see that the base of the miniversal deformation (or equivalently the Kuranishi family) of the projective threefold $X$ has two irreducible components, which deform $X$ to two different smooth Fano threefolds, namely:
\begin{itemize}
\item a general element $X_2$ of the linear system $\vert \cO_{\PP^2 \times \PP^2} (1,1) \vert$,
\item $X_3 = \PP^1 \times \PP^1 \times \PP^1$.
\end{itemize}
(The reason for the subscripts $2$ and $3$ will be evident later.)
These two smooth Fanos are connected via deformation through $X$, but cannot be connected via a deformation with smooth fibres, as their Betti numbers are different.

As $X$ is toric, by means of toric geometry, we can associate to $X$ a 3-dimensional lattice polytope $P \subseteq \RR^3$ which is a hexagonal pyramid (see the precise definition in \eqref{eq:pyramid} and Figure~\ref{fig:piramide}). The hexagonal facet of $P$ is denoted by $F$ (see \eqref{eq:hexagon} and the left part of Figure~\ref{fig:Minkow}). In Proposition~\ref{prop:deformations_X} we will see that the two smoothings of $X$ are associated to some combinatorial additional data on the polytope $P$. More precisely, they correspond to the two maximal Minkowski decompositions of the hexagon $F$ (see \eqref{eq:minkowski_dec_segments} and \eqref{eq:minkowski_dec_triangles}, and Figure~\ref{fig:Minkow}). We will introduce the notion of Minkowski sum and Minkowski decomposition in \S\ref{sec:minkowski_deformations_U}.

\begin{figure}[t]
\centering
\includegraphics[width=5cm]{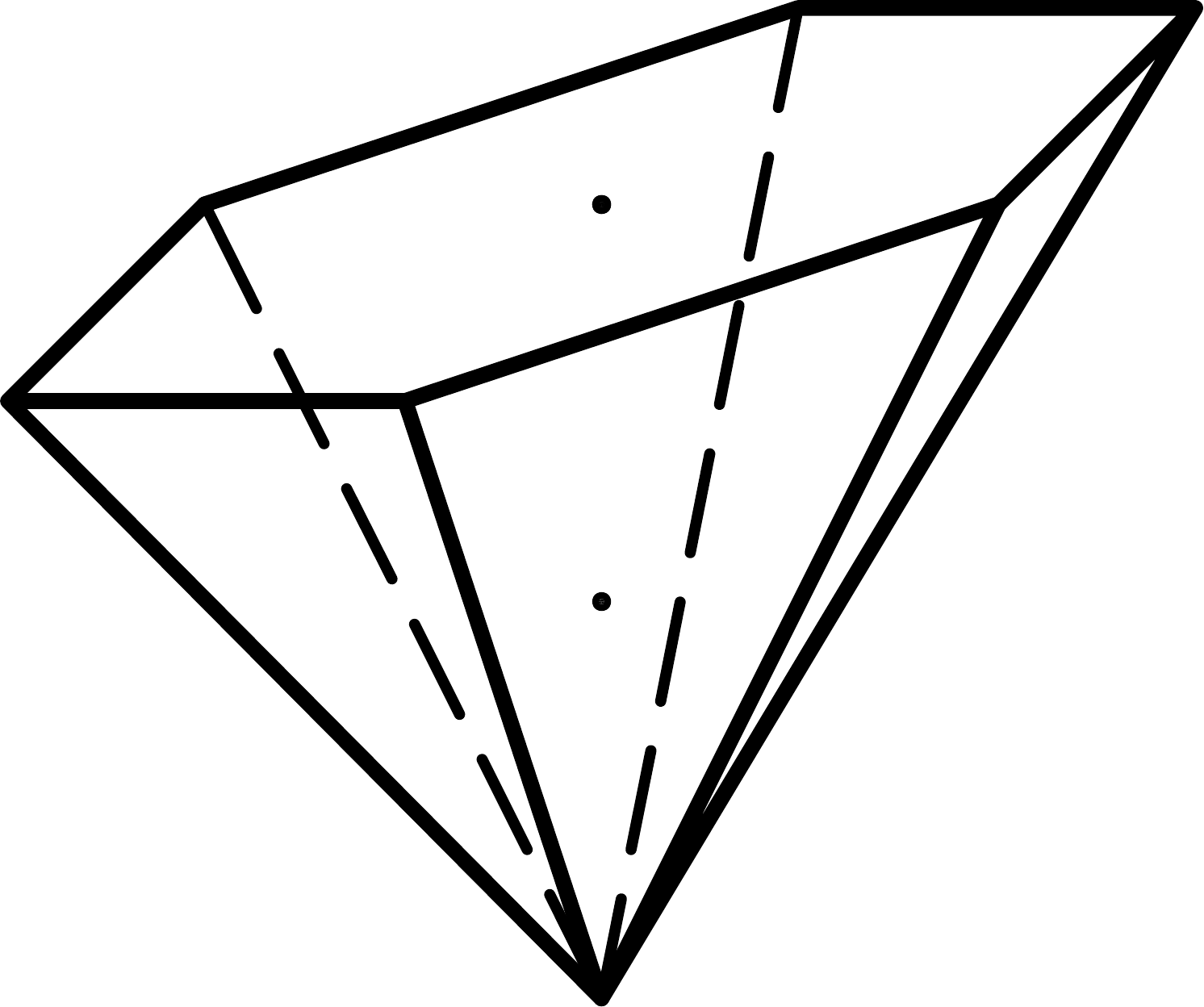}
\caption{The 3-dimensional lattice polytope $P$ associated to $X$.}
\label{fig:piramide}
\end{figure}

Now we consider the Laurent polynomials in 3 variables which are supported on $P$, i.e. Laurent polynomials $f \in \CC [ x^\pm, y^\pm, z^\pm]$ such that if the monomial $x^i y^j z^k$ appears in $f$ then the point $(i,j,k) \in \ZZ^3$ lies in $P$. Among these Laurent polynomials, we consider those which have coefficient 1 on the vertices of $P$ and have coefficient 0 on the origin of $\RR^3$; this gives rise to the following 1-dimensional family:
\begin{equation*}
f_a = z \left(a + x+xy+y+\frac{1}{x} + \frac{1}{xy} + \frac{1}{y} \right) + \frac{1}{z}
\end{equation*}
with parameter $a \in \CC$.

One can show that $f_2$ is mirror to $X_2$ and $f_3$ is mirror to $X_3$. In other words, a certain generating function for some Gromov--Witten invariants of $X_2$, called \emph{quantum period} (see \S\ref{sec:quantum_periods}), is equal to a certain power series, called \emph{classical period} (see \S\ref{sec:classical_periods}), associated to $f_2$, and the same holds for $X_3$ and $f_3$. Here we are using the formulation of the Mirror Symmetry correspondence between Fanos and Landau--Ginzburg models that is given in \cite{mirror_symmetry_and_fano_manifolds,
przyjalkowski_landau_ginzburg_fano, victor_lg} and summarised in \S\ref{sec:Fano-Landau_Ginzburg}.

We will see that the Laurent polynomial $f_2$ is closely related to the combinatorial input given by the Minkowski decomposition of the hexagon $F$ which is associated to the smoothing of $X$ to $X_2$. Analogously, $f_3$ is closely related to the Minkowski decomposition of the hexagon $F$ which is associated to the smoothing of $X$ to $X_3$.

\subsection{The general picture} \label{sec:general}

What we have described in the case of the projective cone over the del Pezzo surface of degree 6 is an instance of the following conjecture, which is still slightly vague.

\begin{conjecture}[{\cite{mirror_symmetry_and_fano_manifolds}}] \label{conj}
Let $Q$ be a Fano polytope of dimension $3$ and let $X_Q$ be the corresponding toric Fano threefold. Assume that $X_Q$ has Gorenstein singularities.
From some ``combinatorial input'' on $Q$ one constructs 
\begin{itemize}
\item[(i)] \ \ a smoothing $V$ of $X_Q$ and
\item[(ii)]  \ \ a Laurent polynomial $f$ supported on $Q$
\end{itemize}
such that $f$ is mirror to $V$.
\end{conjecture}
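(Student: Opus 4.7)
The plan is to make the combinatorial input in Conjecture~\ref{conj} precise, construct $V$ and $f$ separately from it, and then verify the mirror correspondence by matching periods. Motivated by the example of \S\ref{sec:two_smoothings_of_X} and by Altmann's work, the natural candidate for the input is a choice of Minkowski decomposition $F = F_1 + \cdots + F_r$ for each two-dimensional facet $F$ of $Q$ whose associated affine toric singularity is non-rigid; in dimension three these are precisely the non-triangular lattice polygons that occur as facets of $Q$.

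For step (i), each Minkowski decomposition of a facet $F$ produces, by Altmann's construction \cite{altmann_versal_deformation}, a one-parameter deformation of the affine toric cone over $F$ whose generic fibre is smoother than the central one. To assemble a global smoothing of $X_Q$ one then has to glue these local deformations across the toric strata. The Gorenstein hypothesis gives local unobstructedness for the relevant cotangent cohomology at the singularities, but the global gluing must be controlled via the local-to-global spectral sequence for the cotangent complex of $X_Q$. This is the step I expect to be the main obstacle to a proof in general; in the example at hand it works effortlessly, because $X$ has a single isolated singularity, so no gluing is required and Altmann's local family is already global.

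For step (ii), the Laurent polynomial $f$ is built monomial-by-monomial by taking, on each facet $F$ with decomposition $F = F_1 + \cdots + F_r$, the product of the ``facet polynomials'' $g_j$ supported on $F_j$ with coefficient $1$ on the vertices of $F_j$, so that the coefficient of $x^m$ in $f$ comes from the product $g_1 \cdots g_r$. In the example this recipe, applied to the decompositions \eqref{eq:minkowski_dec_segments} and \eqref{eq:minkowski_dec_triangles} of the hexagon $F$, should reproduce $f_2$ and $f_3$, with the constants $a=2$ and $a=3$ arising as the number of Minkowski summands whose interior contains the origin. Finally, one verifies $f \leftrightarrow V$ by computing the classical period of $f$ (a combinatorial series built from the constant terms of powers of $f$) and the quantum period of $V$ (a generating function of genus-zero Gromov--Witten invariants) and matching them term by term. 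For $X_2$ and $X_3$ the quantum periods can be obtained from quantum Lefschetz and from the known invariants of products of projective spaces, reducing the mirror matching to a finite, checkable computation; a general proof along these lines remains open and indeed constitutes the core content of the conjecture.
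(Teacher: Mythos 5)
The statement you were asked to address is a \emph{conjecture}, not a theorem, and the paper does not prove it. In fact the paper is explicit about this: \S\ref{sec:general} says the conjecture is ``still slightly vague,'' that the appropriate combinatorial input in general is unknown (there are reflexive polytopes with facets admitting no Minkowski decomposition into $A$-triangles), and that even in the favourable situation where the smoothing $V$ can be constructed via \cite{chp}, ``we completely lack a conceptual way to prove that $f$ is mirror to $V$.'' Your proposal is therefore a strategy sketch rather than a proof, and to your credit you say so at the end. The framework you outline --- Minkowski decompositions of facets as the input, Altmann's local deformations plus local-to-global cotangent-complex analysis for (i), and Minkowski polynomials for (ii) --- is indeed exactly the framework of \cite{sigma,chp,altmann_minkowski_sums,altmann_versal_deformation} that the paper is summarising. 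However, you misidentify the main obstacle: the gluing/obstruction problem in (i) is real but is largely handled in \cite{chp} under a mild extra hypothesis (necessary by \cite{petracci_local_to_global_obstruction}); the genuinely wide-open part is the period matching, i.e.\ proving $\widehat{G}_V=\pi_f$, for which no general mechanism is known.

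One concrete error in your (ii): you claim the constants $a=2$ and $a=3$ arise as ``the number of Minkowski summands whose interior contains the origin.'' That is not what happens --- none of the three segments in \eqref{eq:minkowski_dec_segments} nor the two triangles in \eqref{eq:minkowski_dec_triangles} has the origin in its interior (the origin is a vertex of each summand). Rather, $a$ is simply the constant term of the product $g_1\cdots g_r$ of the facet polynomials: for $(1+x)(1+y)(1+x^{-1}y^{-1})$ the constant monomial arises in two ways ($1\cdot 1\cdot 1$ and $x\cdot y\cdot x^{-1}y^{-1}$), giving $a=2$; for $(1+x^{-1}y^{-1}+y^{-1})(1+xy+y)$ it arises in three ways, giving $a=3$. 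This is the correct reading of your own product recipe, so the slip is in the ``explanation,'' not in the construction. Finally, your recipe as stated only assigns coefficients to lattice points of facets; you should also say how the coefficient $0$ at the origin of $\ZZ^3$ and the vertex coefficients $1$ are built in, as the paper does in \S\ref{sec:laurent_polynomials}.
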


The definition of ``mirror'' that we are using
comes from \cite{mirror_symmetry_and_fano_manifolds,
przyjalkowski_landau_ginzburg_fano, victor_lg} and is given in Definition~\ref{def:mirror}.

If the toric variety $X_Q$ is smooth (there are 18 cases), then the polytope $Q$ has only triangular facets which are standard simplices and $X_Q$ is rigid. Thus $V = X_Q$ and $f$ is uniquely determined by insisting that it has coefficient 1 on vertices of $Q$ and coefficient 0 on the origin. This case was already known by Givental \cite{givental_toric_ci, givental_equivariant} who proved that $f$ is mirror to $X_Q$.

In the example considered in this note, the combinatorial input on $P$ is the choice of a maximal Minkowski decomposition of the facet $F$ of $P$. There are two such choices which lead to two different smoothings of $X$ and to two different Laurent polynomials.

An interesting case, which is not too restrictive, is the following: the combinatorial input is the choice of a Minkowski decomposition of each facet of $Q$ into $A$-triangles. Here an $A$-triangle is either a unitary segment or a lattice triangle which is $\ZZ^2 \rtimes \mathrm{GL}_2(\ZZ)$-equivalent to the convex hull of the points $(0,0), (0,1), (\ell,0)$, for some integer $\ell \geq 1$.
For example, both maximal Minkowski decompositions of the hexagon $F$  are decompositions into $A$-triangles. In these circumstances one can easily construct a Laurent polynomial $f$ which is supported on $Q$ and depends on the choice of the Minkowski decompositions of the facets of $Q$ (see \cite{sigma}, where such Laurent polynomial $f$ is called a \emph{Minkowski polynomial}). 
In joint work with Corti and Hacking \cite{chp}, we construct a smoothing $V$ of $X_Q$, under a slight additional assumption which is necessary by \cite{petracci_local_to_global_obstruction}. It is conjectured that $f$ is mirror to $V$. However, even in this situation we completely lack a conceptual way to prove that $f$ is mirror to $V$.

Unfortunately, there exist polytopes $Q$ which have facets without Minkowski decompositions into $A$-triangles. So, at the moment, it is not clear what sort of combinatorial input we should consider on $Q$ in the general case.

Another approach to construct smoothings of the toric Fano variety $X_Q$ is pursued by Coates, Kasprzyk, and Prince \cite{laurent_inversion, thomas_cracked}; they embed $X_Q$ into a bigger toric variety $Z$ and try to deform it inside $Z$. This works very well in many explicit examples, but a general framework has yet to be discovered.

Finally, it is worth mentioning that Conjecture~\ref{conj} can be stated in all dimensions. Therefore, this might be a way to classify smooth Fano varieties that admit a toric degeneration.

\subsection*{Notation and conventions}

In a real vector space of finite dimension, a polyhedron is the intersection of finitely many closed half-spaces and a polytope is a compact polyhedron; equivalently, a polytope is the convex hull of a finite set.
We denote by $\conv{\cdot}$ the convex hull of a set.

All varieties and schemes are defined over $\CC$. We always use the following notation.
\begin{longtable}{lp{0.85\textwidth}}
$\dP_6$ & the smooth del Pezzo surface of degree 6 \\
$X$ & the projective cone over the anticanonical embedding of $\dP_6$ \\
$U$ & the affine cone over the anticanonical embedding of $\dP_6$ \\
$X_2$ & a general effective divisor of type $(1,1)$ in $\PP^2 \times \PP^2$ \\
$X_3$ & $\PP^1 \times \PP^1 \times \PP^1$ \\
$F$ & the lattice polygon associated to $\dP_6$ (see \eqref{eq:hexagon} and the left part of Figure~\ref{fig:Minkow}) \\
$P$ & the lattice polytope associated to $X$ (see \eqref{eq:pyramid} and Figure~\ref{fig:piramide}) \\
$f_a$ & $z \left(a + x+xy+y+ x^{-1}  + x^{-1} y^{-1} + y^{-1} \right) + z^{-1}$, for each $a \in \CC$ \\
$Q$ & an arbitrary Fano polytope (see Definition~\ref{def:Fano_polytope}) \\
$X_Q$ & the Fano toric variety associated to the Fano polytope $Q$
\end{longtable}

\subsection*{Acknowledgements} 

I am indebted to Tom Coates, Alessio Corti, Paul Hacking, Alexander Kasprzyk, Thomas Prince, and the other members of the Fanosearch group for countless fruitful conversations about the topics of this note. I wish to thank the organisers of the conference ``Birational geometry and moduli spaces'', held in Rome in June 2018, for giving me the opportunity to present a poster about this subject. Finally, I would like to thank Enrica Floris, Luigi Lunardon, and Diletta Martinelli for useful comments on a preliminary version of this note.

The author was funded by Kasprzyk's EPSRC Fellowship EP/N022513/1.

\section{The geometry of $X$}

\subsection{Toric geometry}
\label{sec:toric_geometry}

We now recall the basics of Fano toric varieties. We refer the reader to \cite[\S8.3]{cox_toric_varieties}, \cite[p.~25]{fulton_toric_varieties}, and \cite{fano_polytopes}.

\begin{definition} \label{def:Fano_polytope}
Let $N$ be a lattice of rank $n$. A \emph{Fano polytope} in $N$ is an $n$-dimensional polytope $Q \subseteq N_\RR$ such that  the origin $0 \in N$ lies in the interior of $Q$ and every vertex of $Q$ is a primitive lattice point of $N$.

The \emph{spanning fan} of a Fano polytope $Q$ in $N$ is the complete fan whose cones are the cones over the proper faces of $Q$.
We denote by $X_Q$ the toric variety associated to the spanning fan of a Fano polytope $Q$.
\end{definition}

For brevity, we say that $X_Q$ is associated to $Q$, and conversely.
If $Q$ is a Fano polytope of dimension $n$, then $X_Q$ is an $n$-dimensional complete toric variety which is Fano, i.e. its anticanonical divisor is $\QQ$-Cartier and ample. Every Fano toric variety arises in this way from a Fano polytope.

For example, consider the hexagon
\begin{equation} \label{eq:hexagon}
F = \conv{
\vectortwo{1}{0},
\vectortwo{1}{1},
\vectortwo{0}{1},
\vectortwo{-1}{0},
\vectortwo{-1}{-1},
\vectortwo{0}{-1}
} \subseteq \RR^2,
\end{equation}
which is depicted on the left of Figure~\ref{fig:Minkow}. It is clear that $F$ is a Fano polytope in $\ZZ^2$.
The toric variety associated to its spanning fan is the smooth del Pezzo surface of degree $6$, denoted by $\dP_6$. The anticanonical map of $\dP_6$ is a closed embedding into $\PP^6$.

Now imagine to put the hexagon $F$ into the plane $\RR^2 \times \{1 \}$ in $\RR^3$ and create the pyramid over it with apex at the point $(0,0,-1)$: this is the polytope 
\begin{equation} \label{eq:pyramid}
P = \conv{\vector{1}{0}{1}, \vector{1}{1}{1}, \vector{0}{1}{1}, \vector{-1}{0}{1}, \vector{-1}{-1}{1}, \vector{0}{-1}{1}, \vector{0}{0}{-1}} \subseteq \RR^3
\end{equation}
and is depicted in Figure~\ref{fig:piramide}. It is clear that $P$ is a Fano polytope in $\ZZ^3$.
Let $X$ be the toric variety associated to the spanning fan of $P$.
Let $U$ be the affine  toric open subscheme of $X$ associated to the hexagonal face of $P$, i.e. $U$ is the affine toric variety associated to the cone $\RR_{\geq 0} (F \times \{ 1 \})$. Hence $X$ (resp. $U$) is the projective (resp. affine) cone over the anticanonical embedding of $\dP_6$. We have that $X$ is a Fano threefold with an isolated non-terminal  canonical Gorenstein singularity at the vertex of the cone.

\subsection{Equations} \label{sec:equations}

The equations of the three closed embeddings $\dP_6 \subseteq \PP^6$, $U \subseteq \AA^7$ and $X \subseteq \PP^7$ are the same and can be conveniently described  in two ways.
Here, $x_1, \dots, x_7$ denote the homogeneous coordinates of $\PP^6$, the affine coordinates of $\AA^7$ and the last homogeneous coordinates of $\PP^7$, as $x_0$ is the remaining homogeneous coordinate of $\PP^7$.

The first way is:
\begin{equation*}
\rank \begin{pmatrix}
x_7 & x_1 & x_2 \\
x_4 & x_7 & x_3 \\
x_5 & x_6 & x_7
\end{pmatrix} \leq 1.
\end{equation*}
Note the repetition of $x_7$ on the diagonal. If two of the $x_7$'s had been two extra variables, these would have been the equations of the Segre embedding of $\PP^2 \times \PP^2$ in $\PP^8$. This shows that $X$ is the intersection of the projective cone over the Segre embedding of $\PP^2 \times \PP^2$ with two hyperplanes of $\PP^9$ passing through the vertex.

Now consider the cube
\begin{equation*}
\begin{tikzpicture}[%
  back line/.style={densely dotted},
  cross line/.style={preaction={draw=white, -,line width=6pt}}]
  \node (A) {$x_3$};
  \node [right of=A] (B) {$x_2$};
  \node [below of=A] (C) {$x_7$};
  \node [right of=C] (D) {$x_1$};
 
  \node (A1) [right of=A, above of=A, node distance=1cm] {$x_4$};
  \node [right of=A1] (B1) {$x_7$};
  \node [below of=A1] (C1) {$x_5$};
  \node [right of=C1] (D1) {$x_6$};
 
  \draw[back line] (D1) -- (C1) -- (A1);
  \draw[back line] (C) -- (C1);
  \draw[cross line] (D1) -- (B1) -- (A1) -- (A)  -- (B) -- (D) -- (C) -- (A);
  \draw (D) -- (D1) -- (B1) -- (B);
\end{tikzpicture}
\end{equation*}
where at the vertices there are the variables $x_1, \dots, x_7$. Note the repetition of $x_7$. The second way to describe the equations is to consider the determinants of all rectangles which can be formed with edges of the cube or with diagonals of faces of the cube. If one of the $x_7$'s had been an extra variable, these would have been the equations of the Segre embedding of $\PP^1 \times \PP^1 \times \PP^1$ into $\PP^7$. This shows that $X$ is the intersection of the projective cone over the Segre embedding of $\PP^1 \times \PP^1 \times \PP^1$ with a hyperplane of $\PP^8$ passing through the vertex.

The equations above also appear in \cite[Example~3.3]{priska_radloff}. Moreover, these two ways of describing the equations of $X$ in $\PP^7$ are called Tom and Jerry, respectively, in \cite{tom_jerry}.

\subsection{Minkowski sums and deformations of $U$} \label{sec:minkowski_deformations_U}

We first define the notion of Minkowski sum of polyhedra (for instance see \cite[\S1.1]{ziegler}). If $\Pi_1, \dots, \Pi_r$ are polyhedra in a real vector space, we define their \emph{Minkowski sum} to be the polyhedron
\[
\Pi_1 + \cdots + \Pi_r := \{ p_1 + \cdots + p_r \mid p_1 \in \Pi_1, \dots, p_r \in \Pi_r \}.
\]
When we have $\Pi = \Pi_1 + \cdots + \Pi_r $, we say that we have a \emph{Minkowski decomposition} of the polyhedron $\Pi$. We consider Minkowski decompositions up to translation: for instance, we consider the Minkowski decomposition $(p+ \Pi_1) + (-p + \Pi_2)$ to be equivalent to $\Pi_1 + \Pi_2$ for every vector $p$. Moreover, in what follows we require that the summands $\Pi_j$ are lattice polyhedra, i.e. their vertices belong to a fixed lattice.

The hexagon $F$ has two maximal Minkowski decompositions (see Figure~\ref{fig:Minkow}): one into 3 unitary segments
\begin{equation} \label{eq:minkowski_dec_segments}
F = \conv{\vectortwo{0}{0}, \vectortwo{1}{0}} 
+ \conv{\vectortwo{0}{0}, \vectortwo{0}{1}}
+ \conv{\vectortwo{0}{0}, \vectortwo{-1}{-1}} 
\end{equation}
and one into 2 triangles
\begin{equation}
\label{eq:minkowski_dec_triangles}
F = \conv{\vectortwo{0}{0}, \vectortwo{-1}{0}, \vectortwo{-1}{-1}  }
+ \conv{ \vectortwo{0}{0} , \vectortwo{1}{0}, \vectortwo{1}{1}}. 
\end{equation}

\begin{figure}[t]
\centering
\def\svgwidth{10cm}
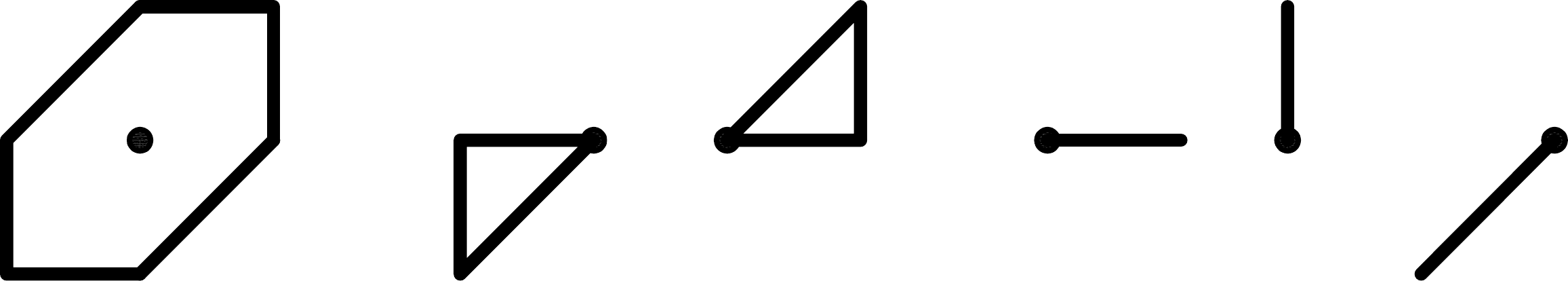
\caption{The two maximal Minkowski decompositions of the hexagon $F$.}
\label{fig:Minkow}
\end{figure}

Altmann \cite{altmann_minkowski_sums} has noticed that Minkowski decompositions of polytopes induce deformations of affine toric varieties (see also \cite{mavlyutov} and \cite{petracci_mavlyutov}). 
More precisely, from a Minkowski decomposition of a polytope $\Pi$ it is possible to construct an unobstructed deformation of the affine toric variety associated to the cone $\RR_{\geq 0} (\Pi \times \{ 1 \})$.

In the case at hand, the Minkowski decomposition \eqref{eq:minkowski_dec_segments} induces the deformation of $U$ over $\Spec \CC [u,v]$ given by the equations
\begin{equation*}
\rank \begin{pmatrix}
x_7 & x_1 & x_2 \\
x_4 & x_7 + u & x_3 \\
x_5 & x_6 & x_7 + v
\end{pmatrix} \leq 1.
\end{equation*}
The Minkowski decomposition \eqref{eq:minkowski_dec_triangles} induces the deformation of $U$ over $\Spec \CC[s]$ given by the equations obtained by taking minors of rectangles on edges and diagonals of faces of the following cube.
\begin{equation*}
\begin{tikzpicture}[%
  back line/.style={densely dotted},
  cross line/.style={preaction={draw=white, -,line width=6pt}}]
  \node (A) {$x_3$};
  \node [right of=A] (B) {$x_2$};
  \node [below of=A] (C) {$x_7$};
  \node [right of=C] (D) {$x_1$};
 
  \node (A1) [right of=A, above of=A, node distance=1cm] {$x_4$};
  \node [right of=A1] (B1) {$x_7 + s $};
  \node [below of=A1] (C1) {$x_5$};
  \node [right of=C1] (D1) {$x_6$};
 
  \draw[back line] (D1) -- (C1) -- (A1);
  \draw[back line] (C) -- (C1);
  \draw[cross line] (D1) -- (B1) -- (A1) -- (A)  -- (B) -- (D) -- (C) -- (A);
  \draw (D) -- (D1) -- (B1) -- (B);
\end{tikzpicture}
\end{equation*}

Moreover, Altmann \cite{altmann_versal_deformation} shows that the miniversal deformation of $U$ is (the completion of) the union of these two deformations and its base is $\CC \pow{s,u,v} / (su,sv)$.

\subsection{The two smoothings of $X$}
\label{sec:two_smoothings_of_X}

Now we want to study deformations of $X$.

\begin{proposition} \label{prop:deformations_X}
The base of the miniversal deformation of $X$ is $\CC \pow{s,u,v}/(su, sv)$ and has two irreducible components. 
The 2-dimensional component $(s=0)$ is associated to the Minkowski decomposition \eqref{eq:minkowski_dec_segments} and deforms $X$ to a general divisor $X_2 \in \vert \cO_{\PP^2 \times \PP^2}(1,1) \vert$.
The 1-dimensional component $(u=v=0)$  is associated to the Minkowski decomposition \eqref{eq:minkowski_dec_triangles} and deforms $X$ to $X_3 = \PP^1 \times \PP^1 \times \PP^1$.
\end{proposition}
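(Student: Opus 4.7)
The plan is to bootstrap Altmann's computation of the miniversal deformation of $U$, recalled in \S\ref{sec:minkowski_deformations_U}, to the projective cone $X$, and then identify the two generic fibres using the two determinantal presentations from \S\ref{sec:equations}. Observing that the equations of $X$ in $\PP^7$ are homogeneous of degree $2$ in $x_0,\ldots,x_7$, I would homogenise the two Altmann families for $U$ by replacing the parameters $u,v,s$ with the linear forms $ux_0$, $vx_0$, $sx_0$. This yields a flat family over $\Spec \CC[u,v]$ and one over $\Spec \CC[s]$, both with central fibre $X$; glueing their formal completions produces a formal deformation of $X$ over $\CC\pow{s,u,v}/(su,sv)$ whose restriction to the affine chart $U$ recovers Altmann's miniversal deformation.

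To verify that this formal deformation is miniversal I would compare $\Def(X)$ with $\Def(U)$ via the restriction map, which is well defined since the singularity of $X$ is isolated. The local-to-global $\cExt$ spectral sequence computing the $T^i(X)$ involves sheaves $T^q_X$ for $q\geq 1$ which are supported at the vertex, so $\rH^0(X,T^1_X)=T^1(U)$ and $\rH^0(X,T^2_X)=T^2(U)$. The identification of tangent and obstruction spaces of $\Def(X)$ with those of $\Def(U)$ then reduces to the vanishings $\rH^i(X,T_X)=0$ for $i=1,2$, which I would establish via the Euler-type sequence of the projective cone combined with the rigidity of $\dP_6$ (namely $\rH^1(\dP_6,T_{\dP_6})=0$) and standard Bott-type vanishings for the twisted tangent cohomology of the smooth toric Fano $\dP_6$.

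Finally I would identify the generic fibres. For the component $s=0$, introduce $z_1=x_7$, $z_2=x_7+ux_0$, $z_3=x_7+vx_0$; these obey the single linear relation $(u-v)z_1+vz_2-uz_3=0$, so for $u,v,u-v$ nonzero the forms $z_1,z_2,z_3,x_1,\ldots,x_6$ furnish a linear isomorphism of $\PP^7$ with a hyperplane of $\PP^8$. Under this isomorphism the deformed equations become the rank-$\leq 1$ condition on the $3\times 3$ matrix with independent entries $z_i,x_j$, i.e.\ the Segre $\PP^2\times\PP^2\subseteq\PP^8$; the corresponding hyperplane section has coefficient matrix $\mathrm{diag}(u-v,v,-u)$, whose determinant $-uv(u-v)$ is nonzero, so the section is a smooth $(1,1)$-divisor, namely $X_2$. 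For the component $u=v=0$ an analogous argument identifies $x_7,x_7+sx_0,x_1,\ldots,x_6$ as eight independent linear forms on $\PP^7$ whenever $s\neq 0$, turning the cube equations into those of the Segre $\PP^1\times\PP^1\times\PP^1\subseteq\PP^7$; the generic fibre is thus $X_3$. The main obstacle will be the cohomology vanishings $\rH^i(X,T_X)=0$ in the second step, which require a concrete justification tailored to this particular projective cone rather than a black-box invocation.
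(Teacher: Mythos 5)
Your overall strategy coincides with the paper's: compare $\Def_X$ with $\Def_U$ via restriction using the local-to-global $\cExt$ spectral sequence, exploit the isolated singularity to identify the higher local $\cExt$ sheaves with those of $U$, reduce to showing $\rH^q(X, T_X)=0$ for $q\geq 1$, projectivise Altmann's two families by substituting $sx_0,ux_0,vx_0$, and then identify the generic fibres with $X_2$ and $X_3$ using the Tom/Jerry determinantal presentations. Your identification of the generic fibres is in fact more explicit and rigorous than the paper's, which argues informally that the deformation ``moves the hyperplane(s) away from the vertex''; your coordinate change $z_1=x_7$, $z_2=x_7+ux_0$, $z_3=x_7+vx_0$ and the coefficient matrix $\mathrm{diag}(u-v,v,-u)$ with nonvanishing determinant $-uv(u-v)$ is a clean verification that the generic $(1,1)$-section is smooth.

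The one genuine gap is the cohomology vanishing $\rH^q(X,\cHom_X(\Omega_X,\cO_X))=0$ for $q\geq 1$, which you flag but do not close, proposing instead an Euler-sequence argument on the projective cone. The paper's route is much slicker and worth knowing: since $X$ is a normal threefold, the two sheaves $\cHom_X(\Omega_X,\cO_X)$ and $j_*\Omega_W^2\otimes\cO_X(-K_X)$ (with $j\colon W\hookrightarrow X$ the smooth locus) are both reflexive and agree on $W$, hence are equal; and because $X$ is toric Fano, Bott--Steenbrink--Danilov vanishing applied to $j_*\Omega_W^2\otimes\cO_X(-K_X)$ gives the required vanishing directly, with no need to descend to $\dP_6$. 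This trick (attributed in the paper to Totaro) bypasses entirely the Euler sequence of the cone and the rigidity of $\dP_6$, which would otherwise need a careful bookkeeping of the twists appearing in the cone construction. Your proposed route is plausible but would take real work to carry out, and until it is done the proof is incomplete at its central step.
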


\begin{proof}
Consider the local-to-global spectral sequence for $\Ext^\bullet_X(\Omega_X, \cO_X)$: the second page is $E^{p,q}_2 = \rH^q (X, \cExt^p_X(\Omega_X, \cO_X))$.
As $X$ has an isolated singularity, for all $p \geq 1$, the sheaf $\cExt^p_X(\Omega_X, \cO_X)$ is supported on the singular point of $X$; therefore, for all $p \geq 1$ and $q \geq 1$, $E^{p,q}_2 = 0$. 

Let $j \colon W \into X$ be the smooth locus. The sheaves $\cHom_X(\Omega_X, \cO_X)$
and $j_* \Omega_W^2 \otimes \cO_X(-K_X)$ are the same, because they are both reflexive and coincide on $W$.
As $X$ is toric and $-K_X$ is ample, by Bott--Steenbrink--Danilov vanishing \cite[Theorem~9.3.1]{cox_toric_varieties} (see also \cite{buch_thomsen, fujino_vanishing, mustata_vanishing}) one has $E_2^{0,q} = \rH^q (X, \cHom_X(\Omega_X, \cO_X)) = 0$ for all $q \geq 1$. This argument comes from the proof of \cite[Theorem~5.1]{totaro_jumping}.

Therefore $E_2$ is zero outside the line $q = 0$. This implies that, for all $p \geq 0$, the natural map
\[
\Ext^p_X (\Omega_X, \cO_X) \to \rH^0(X, \cExt^p_X (\Omega_X, \cO_X))
\]
is an isomorphism. Since the unique singular point of $X$ is contained in $U$ and $U$ is affine, we have $\rH^0(X, \cExt^p_X (\Omega_X, \cO_X)) = \Ext^p_U (\Omega_U, \cO_U)$ for all $p \geq 1$. This implies that, for all $p \geq 1$, the natural map
\begin{equation*}
\phi_p \colon \Ext^p_X (\Omega_X, \cO_X) \to \Ext^p_U (\Omega_U, \cO_U),
\end{equation*}
is an isomorphism.

We now consider the functors of infinitesimal deformations of $X$ and $U$: $\Def_X$ and $\Def_U$, which are covariant functors from the category of local finite $\CC$-algebras to the category of sets (see \cite[\S 3]{manetti_seattle}). There is an obvious map $\phi \colon \Def_X \to \Def_U$, which restricts a deformation of $X$ to $U$. Since $X$ is normal, $\Ext^1_X(\Omega_X, \cO_X)$ is the tangent space of $\Def_X$ and $\Ext^2_X(\Omega_X, \cO_X)$ is an obstruction space for $\Def_X$, and a similar statement holds for $U$. Since $\phi_1$ is bijective and $\phi_2$ is injective, we have that $\phi$ induces an isomorphism on tangent spaces and an injection on obstruction spaces. Therefore, by \cite[Remark~4.12]{manetti_seattle}, $\phi$ is smooth and induces an isomorphism on tangent spaces. In particular, the two functors $\Def_X$ and $\Def_U$ have the same hull, i.e. the bases of the miniversal deformations of $X$ and $U$ are the same.

The equations of the two deformations of $U$, given in \S\ref{sec:minkowski_deformations_U}, can be projectivised to construct deformations of $X$: it is enough to replace $s$, $u$ and $v$ by $s x_0$, $u x_0$ and $v x_0$. These are the two components of the miniversal deformation of $X$. The fact that they are associated to the two Minkowski decompositions \eqref{eq:minkowski_dec_segments} and \eqref{eq:minkowski_dec_triangles} of the hexagon $F$ follows from the discussion in \S\ref{sec:minkowski_deformations_U}.

From \S\ref{sec:equations} we know that $X$ is the intersection of the projective cone over the Segre embedding of $\PP^2 \times \PP^2$ with two hyperplanes of $\PP^9$ passing through the vertex of the cone. On the component $(s=0)$, in the deformation we are moving these two hyperplanes away from the vertex. Therefore, the general fibre over this component is $X_2$, a general $(1,1)$-divisor in $\PP^2 \times \PP^2$.

Recall that $X$ is the intersection of the projective cone over the Segre embedding of $\PP^1 \times \PP^1 \times \PP^1$ with a hyperplane of $\PP^8$ passing through the vertex. On the component $(u=v=0)$, in the deformation we are moving this hyperplane of $\PP^8$ away from the vertex. Therefore, the general fibre on this component is $X_3 = \PP^1 \times \PP^1 \times \PP^1$.
\end{proof}

\section{Mirror Symmetry}

\subsection{Gromov--Witten invariants and quantum periods} \label{sec:quantum_periods}

The quantum period of a smooth Fano variety is a generating function for some genus zero Gromov--Witten invariants. The regularised quantum period is a slightly different version, which is convenient for our description of Mirror Symmetry.

\begin{definition}[\cite{mirror_symmetry_and_fano_manifolds, quantum_periods_3folds, przyjalkowski_landau_ginzburg_fano}] \label{def:quantum_period}
The \emph{quantum period} and the \emph{regularised quantum period} of a smooth Fano variety $V$ are the following power series 
\begin{gather*}
G_V(t) = 1 + \sum_{\beta \in \rH_2(V,\ZZ)}  \langle [\mathrm{pt}] \psi^{-K_V \cdot \beta -2}\rangle_{0,1,\beta}^V t^{-K_V \cdot \beta} \in \QQ \pow{t} \\
\widehat{G}_V(t) = 1 + \sum_{\beta \in \rH_2(V,\ZZ)} (-K_V \cdot \beta)!  \langle [\mathrm{pt}] \psi^{-K_V \cdot \beta -2}\rangle_{0,1,\beta}^V t^{-K_V \cdot \beta} \in \QQ \pow{t}
\end{gather*}
where $\langle [\mathrm{pt}] \psi^{-K_V \cdot \beta -2}\rangle_{0,1,\beta}^V$ denotes the $1$-marked genus zero Gromov--Witten invariant of curve class $\beta$ associated to the cohomology class of a point in $V$ and gravitational descendant of order $-K_V \cdot \beta - 2$.
\end{definition}

Roughly speaking, $\langle [\mathrm{pt}] \psi^{-K_V \cdot \beta -2}\rangle_{0,1,\beta}^V$ is the number of rational curves in $V$ of class $\beta$ passing through a fixed general point of $V$ and satisfying a certain condition on their complex structure. Therefore, the quantum period $G_V$ gives information about rational curves in $V$. The series $G_V$ is a symplectic invariant of $V$, so it does not change if $V$ is deformed to another smooth Fano variety through a deformation with smooth fibres.

If the anticanonical line bundle $\cO_V(-K_V)$ is divisible by a positive integer $m$ inside the Picard group of the smooth Fano variety $V$, then only powers of $t^m$ appear in the (regularised) quantum period of $V$.

It is also possible to define quantum periods for Fano varieties with quotient singularities \cite[\S3.3]{oneto_petracci}.

It is known how to compute the quantum period of smooth Fano varieties which are either toric or complete intersections in smooth Fano toric varieties \cite{quantum_lefschetz_tom_givental, givental_equivariant}. 
The quantum periods of all smooth Fano varieties of dimension $\leq 3$ have been computed by Coates, Corti, Galkin, and Kasprzyk \cite{quantum_periods_3folds}. In particular, we have the following formulae for $X_2$ and $X_3$.

\begin{proposition}[{\cite{quantum_periods_3folds}}] \label{prop:quantum_periods_X2_X3}
The quantum periods and the regularised quantum periods of $X_2$ and $X_3$ are the following.
\begin{gather*}
G_{X_2} (t) = \sum_{l=0}^\infty \sum_{m=0}^\infty \frac{(l+m)!}{(l!)^3 (m!)^3} t^{2l + 2m} \\
G_{X_3}(t) = \sum_{l=0}^\infty \sum_{m=0}^\infty \sum_{n=0}^\infty \frac{1}{(l!)^2 (m!)^2 (n!)^2} t^{2l + 2m + 2n}
\end{gather*}
\begin{gather*}
\widehat{G}_{X_2} (t) = 1 + 4t^2 + 60 t^4 + 1120 t^6 + 24220 t^8 + 567504 t^{10} + \cdots \\
\widehat{G}_{X_3} (t) = 1 + 6t^2 + 90 t^4 + 1860 t^6 + 44730 t^8 + 1172556 t^{10} + \cdots 
\end{gather*}
\end{proposition}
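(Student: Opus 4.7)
The plan is to apply Givental's mirror theorem for smooth toric Fano varieties (for $X_3$) and the quantum Lefschetz principle (for $X_2$), and then pass to the regularised versions by the definition.

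For $X_3 = \PP^1 \times \PP^1 \times \PP^1$ the quantum period factorises over Cartesian products of smooth Fano varieties, so $G_{X_3}(t) = G_{\PP^1}(t)^3$. Givental's toric mirror theorem \cite{givental_equivariant} (or a direct computation of genus-zero descendants on $\PP^1$) yields $G_{\PP^1}(t) = \sum_{d \geq 0} t^{2d}/(d!)^2$, and cubing this series produces the displayed triple sum.

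For $X_2$ I would realise $X_2$ as a smooth section of $\cO_{\PP^2 \times \PP^2}(1,1)$ inside the smooth toric Fano $\PP^2 \times \PP^2$, and apply quantum Lefschetz \cite{quantum_lefschetz_tom_givental}. The starting point is the toric $I$-function of $\PP^2 \times \PP^2$ twisted by $\cO(1,1)$, whose summand of bidegree $(l,m)$ is
\[
\frac{\prod_{k=1}^{l+m}(H_1 + H_2 + k \hbar)}{\prod_{k=1}^{l}(H_1 + k\hbar)^3 \prod_{k=1}^{m}(H_2 + k\hbar)^3},
\]
where $H_1, H_2$ are the hyperplane classes on the two factors. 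Since $-K_{X_2} = 2(H_1 + H_2)|_{X_2}$ has Fano index $2$, the mirror map is trivial, so the $I$-function coincides with the small $J$-function of $X_2$; extracting the point-class component and re-indexing via $t^{-K_{X_2} \cdot \beta} = t^{2(l+m)}$ gives $G_{X_2}(t) = \sum_{l,m} \frac{(l+m)!}{(l!)^3 (m!)^3} t^{2l+2m}$.

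The regularised versions $\widehat{G}_{X_2}$ and $\widehat{G}_{X_3}$ follow immediately from Definition~\ref{def:quantum_period}: multiply each coefficient of $t^d$ in $G_V(t)$ by $d!$. As a sanity check, in degree $2$ one reads off the coefficient of $\widehat{G}_{X_2}$ as $2 \cdot 2! = 4$ and of $\widehat{G}_{X_3}$ as $3 \cdot 2! = 6$, matching the statement. The main obstacle is the careful bookkeeping in the quantum Lefschetz reduction — specifically the verification that the Fano index is large enough that $I$ and $J$ coincide directly, without a non-trivial mirror map — rather than any new geometric idea; the full computation is carried out in \cite{quantum_periods_3folds}, which is the ultimate reference.
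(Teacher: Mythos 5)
The paper does not prove this proposition; it simply cites \cite{quantum_periods_3folds}, which is exactly where your argument (multiplicativity of the quantum period for $X_3 = (\PP^1)^3$, and quantum Lefschetz applied to the toric $I$-function of $\PP^2 \times \PP^2$ twisted by $\cO(1,1)$ for $X_2$) is carried out. Your sketch is correct and matches that reference's approach, including the observation that Fano index $2$ forces the mirror map to be trivial so that the $I$- and $J$-functions agree, and your numerical sanity checks in low degree are right.
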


\subsection{Laurent polynomials} \label{sec:laurent_polynomials}

Let $\CC[x_1^\pm, \dots, x_n^\pm]$ be the ring of Laurent polynomials in $n$ variables with coefficients in $\CC$. To every monomial $x^{\boldsymbol{i}} = x_1^{i_1} \cdots x_n^{i_n}$ we associate the point $\boldsymbol{i} = (i_1,\dots,i_n) \in \ZZ^n$. The \emph{Newton polytope} of a Laurent polynomial $f$ is the convex hull of the lattice points that correspond to the monomials that appear in $f$, i.e. if 
$f = \sum_{\boldsymbol{i} \in \ZZ^n} a_{\boldsymbol{i}} x^{\boldsymbol{i}}$
 then
\[
\mathrm{Newt}(f) = \conv{ \boldsymbol{i} \in \ZZ^n \mid a_{\boldsymbol{i}} \neq 0 } \subseteq \RR^n.
\]
If $Q$ is a lattice polytope in $\ZZ^n$, we say that a Laurent polynomial $f \in \CC[x_1^\pm, \dots, x_n^\pm]$ is \emph{supported} on $Q$ if every monomial appearing in $f$ corresponds to a lattice point of $Q$, or equivalently if $\mathrm{Newt}(f) \subseteq Q$.

Given a Fano polytope $Q$ in $\ZZ^n$, Kasprzyk and Tveiten \cite{al_ketil} have introduced and studied a particular class of Laurent polynomials supported on $Q$; they call them \emph{maximally mutable}, because these behave well with respect to mutations of Fano polytopes \cite{sigma}. The definition of maximally mutable Laurent polynomials in dimension 2 can be also found in \cite{del_pezzo_surfaces}.
We are not going to define maximally mutable Laurent polynomials here, we just mention some properties in a particular case.

In dimension 3, when the Fano toric threefold $X_Q$ has Gorenstein singularities (equivalently $Q$ is a reflexive polytope of dimension 3), every maximally mutable Laurent polynomial on $Q$ is such that:
\begin{itemize}
\item the coefficient of the monomial $1$, corresponding to the origin of $\ZZ^3$, is $0$;
\item the monomials corresponding to the vertices of $Q$ have coefficients equal to $1$;
\item on the edges of $Q$ there are binomial coefficients. (For example, the 4 lattice points of an edge with lattice length $3$ have coefficients $1,3,3,1$.)
\end{itemize}

In the case of the polytope $P$, a Laurent polynomial is supported on $P$ if and only if its monomials are among $1$, $xz$, $xyz$, $yz$, $x^{-1}z$, $x^{-1} y^{-1} z$, $y^{-1} z$, $z^{-1}$, which correspond to the lattice points of $P$. The Laurent polynomials on $P$ which satisfy the three properties above form a 1-dimensional family
\begin{equation*}
f_a = z \left( x+xy+y+\frac{1}{x} + \frac{1}{xy} + \frac{1}{y} + a \right) + \frac{1}{z}
\end{equation*}
with parameter $a \in \CC$. Here $a$ is the coefficient of the centre of the hexagonal facet of $P$. Kasprzyk and Tveiten \cite{al_ketil} show that there are exactly two maximally mutable Laurent polynomials on $P$, namely $f_a$ with $a=2$ and $a=3$. One notices that, in these two cases, the restriction of $f_a$ to the hexagonal facet of $P$ is reducible:
\begin{gather*}
f_2 = z(1+x)(1+y)(1+x^{-1}y^{-1}) + z^{-1}, \\
f_3 = z(1+x^{-1}y^{-1} + y^{-1})(1 +xy +y) + z^{-1}.
\end{gather*}
The Newton polytopes of the three factors of $(1+x)(1+y)(1+x^{-1}y^{-1})$ are the three unitary segments appearing in the Minkowski decomposition \eqref{eq:minkowski_dec_segments} of the hexagon $F$. The Newton polytopes of the factors of $(1+x^{-1}y^{-1} + y^{-1})(1 +xy +y)$ are the two triangles appearing in the Minkowski decomposition \eqref{eq:minkowski_dec_triangles} of the hexagon $F$. 
The Laurent polynomials $f_2$ and $f_3$ are \emph{Minkowski polynomials} in the sense of \cite{sigma}.

\subsection{Classical periods} \label{sec:classical_periods}

We now define the classical period of a Laurent polynomial in $n$ variables.

\begin{definition}[\cite{sigma, galkin_usnich}] \label{def:classical_period}
The \emph{classical period} of $f \in \CC[x_1^\pm, \dots, x_n^\pm]$ is the power series
\begin{align*}
\pi_f(t) &= \left( \frac{1}{2 \pi \rmi} \right)^n \int_{\Gamma_\varepsilon} \frac{1}{1 - t f(x_1, \dots, x_n)} \frac{\rmd x_1 \wedge \cdots \wedge \rmd x_n}{x_1 \cdots x_n} \\
 &= \sum_{k = 0}^\infty \mathrm{coeff}_1 (f^k) t^k
\end{align*}
where in the first formula we are integrating a holomorphic $n$-form of the torus $(\CC^\times)^n = \Spec \CC[x_1^\pm, \dots, x_n^\pm]$ over the real torus $\Gamma_\varepsilon = \{ \vert x_1 \vert = \cdots = \vert x_n \vert = \varepsilon \} \subseteq (\CC^\times)^n$, for some $0 < \varepsilon \ll 1$, and $\mathrm{coeff}_1(f^k) \in \CC$ is the coefficient of the monomial $1 = x_1^0 \cdots x_n^0$ in the Laurent polynomial $f^k$.
\end{definition}

The equality between the two formulae in the definition above comes from applying Cauchy's integral formula $n$ times. The classical period $\pi_f$ is related to the Hodge theory of the fibres of $f \colon (\CC^\times)^n \to \AA^1$.

One can see that the classical period of the Laurent polynomial $f_a$ is
\begin{align*}
\pi_{f_a}(t) &= 1 + 2at^2 +
(6a^2 + 36)t^4 +
(20a^3 + 360a + 240) t^6  \\
&+ (70a^4 + 2520a^2 + 3360a + 6300) t^8 \\
&+ (252a^5 + 15120a^3 + 30240a^2 + 113400a + 90720) t^{10} + \cdots
\end{align*}
for every $a \in \CC$. In particular,
\begin{gather*}
\pi_{f_2} (t) = 1 + 4t^2 + 60 t^4 + 1120 t^6 + 24220 t^8 + 567504 t^{10} + \cdots, \\
\pi_{f_3} (t) = 1 + 6t^2 + 90 t^4 + 1860 t^6 + 44730 t^8 + 1172556 t^{10} + \cdots .
\end{gather*}

\subsection{Fano/Landau--Ginzburg correspondence} \label{sec:Fano-Landau_Ginzburg}

Mirror Symmetry \cite{mirror_symmetry_and_fano_manifolds, katzarkov_victor} predicts that the mirror of a smooth Fano $n$-fold $V$ is a pair $(Y,w)$, called \emph{Landau--Ginzburg model}, where $Y$ is an $n$-fold and  $w \in \Gamma(Y, \cO_Y)$ is a regular function. The Gromov--Witten theory of $V$ should be related to the Hodge theory of the fibres of $w \colon Y \to \AA^1$ as follows: the regularised quantum period $\widehat{G}_V$ (see Definition~\ref{def:quantum_period}) of $V$ coincides with the period $\pi_w$ which is defined as
\begin{equation} \label{eq:hodge_period}
\pi_w(t) = \int_{\Gamma} \frac{\Omega}{1-tw}
\end{equation}
where $\Omega$ is an appropriate holomorphic $n$-form on $Y$ and $\Gamma \in \rH_n(Y;\ZZ)$ is such that $\int_\Gamma \Omega = 1$.

Under some circumstances (which conjecturally and experimentally should coincide with when there is a toric degeneration of $V$) there is an open subset of $Y$ that is isomorphic to the torus $(\CC^\times)^n = \Spec \CC[x_1^\pm, \dots, x_n^\pm]$. In this case the restriction of $w$ to this open subset gives a Laurent polynomial $f \in \CC[x_1^\pm, \dots, x_n^\pm]$. In this situation the period $\pi_w$ in \eqref{eq:hodge_period}, when $Y = (\CC^\times)^n$, $\Gamma = \{ \vert x_1 \vert = \cdots = \vert x_n \vert = \varepsilon \}$ and $\Omega = (2 \pi \rmi)^{-n} (x_1 \cdots x_n)^{-1} \rmd x_1 \cdots \rmd x_n$, becomes the classical period of a Laurent polynomial (see Definition~\ref{def:classical_period}).

Thus, a down-to-earth formulation of Mirror Symmetry between smooth Fano varieties and Laurent polynomials is the following.

\begin{definition}[{\cite{mirror_symmetry_and_fano_manifolds,
przyjalkowski_landau_ginzburg_fano, victor_lg}}]
\label{def:mirror}
A Laurent polynomial $f \in \CC[x_1^\pm, \dots, x_n^\pm]$ is  \emph{mirror} to a smooth Fano variety $V$ of dimension $n$ if the classical period of the former coincides with the regularised quantum period of the latter: $\widehat{G}_V = \pi_f$.
\end{definition}

The equality $\widehat{G}_V = \pi_f$ is equivalent to $G_V$ being equal to the oscillatory integral
\[
\left( \frac{1}{2 \pi \rmi} \right)^n \int_{\Gamma_\varepsilon} \mathrm{e}^{tf} \frac{\rmd x_1 \wedge \cdots \wedge \rmd x_n}{x_1 \cdots x_n} = \sum_{k = 0}^\infty \frac{\mathrm{coeff}_1 (f^k)}{k!} t^k.
\]
Moreover, the equality $\widehat{G}_V = \pi_f$ can be upgraded to an equality between the Gauss--Manin connection on the middle cohomology of the fibres of $f$ and the Dubrovin connection of the quantum D-module of $V$ (see \cite{golyshev_classification}).

\begin{proposition}[{\cite{sigma,quantum_periods_3folds}}] \label{prop:f_23_mirror_X_23}
The Laurent polynomial $f_2$ (resp. $f_3$) is mirror to the smooth Fano threefold $X_2$ (resp. $X_3$).
\end{proposition}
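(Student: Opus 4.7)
The statement is the equality of formal power series $\widehat{G}_{X_j}(t) = \pi_{f_j}(t)$ for $j \in \{2, 3\}$, in the sense of Definition~\ref{def:mirror}. My plan is to compute both sides in a common closed form: the right-hand side using the explicit factorisations of $g_a := a + x + xy + y + x^{-1} + x^{-1}y^{-1} + y^{-1}$ recorded just before the statement, and the left-hand side by rearranging the double and triple sums of Proposition~\ref{prop:quantum_periods_X2_X3}.

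For $\pi_{f_a}$, I would first separate the variable $z$ by writing $f_a = z\, g_a(x,y) + z^{-1}$. The binomial expansion of $f_a^k$ produces $z$-powers $z^{2j-k}$ with $0 \leq j \leq k$, so the constant term can be non-zero only when $k = 2m$ is even and $j = m$; this gives
\begin{equation*}
\pi_{f_a}(t) = \sum_{m \geq 0} \binom{2m}{m} \mathrm{coeff}_1(g_a^m)\, t^{2m},
\end{equation*}
where $\mathrm{coeff}_1$ now denotes the constant term in $\CC[x^\pm, y^\pm]$. Using the factorisations $g_2 = (1+x)(1+y)(1+x^{-1}y^{-1})$ and $g_3 = (1+x^{-1}y^{-1}+y^{-1})(1+xy+y)$, multinomial expansion forces the balancing of the $x$- and $y$-exponents and yields
\begin{equation*}
\mathrm{coeff}_1(g_2^m) = \sum_{l=0}^m \binom{m}{l}^{3}, \qquad \mathrm{coeff}_1(g_3^m) = \sum_{i+j+k=m} \binom{m}{i,j,k}^{2}.
\end{equation*}

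For $\widehat{G}_{X_j}$, I would regroup the series of Proposition~\ref{prop:quantum_periods_X2_X3} according to total degree $d$ (so $d = l+m$ for $X_2$ and $d = l+m+n$ for $X_3$), pull out the factor $(2d)!/(d!)^2 = \binom{2d}{d}$, and rewrite the remaining factorials as multinomial coefficients via $d!/(l_1! \cdots l_r!) = \binom{d}{l_1, \ldots, l_r}$. This produces exactly the closed forms derived above for $\pi_{f_2}$ and $\pi_{f_3}$, and the two equalities follow.

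I do not expect any technical obstacle: once Proposition~\ref{prop:quantum_periods_X2_X3} is available, the verification reduces to a direct combinatorial manipulation. The genuine difficulty is conceptual, namely explaining \emph{why} the two closed forms coincide. A more structural argument would compute the Gromov--Witten invariants of $X_3 = \PP^1 \times \PP^1 \times \PP^1$ via Givental's mirror theorem for toric Fano varieties, and those of $X_2$ via quantum Lefschetz applied to $X_2 \subseteq \PP^2 \times \PP^2$, producing Hori--Vafa Laurent polynomial mirrors; one would then have to exhibit explicit birational changes of variables (cluster-type mutations) identifying those standard mirrors with $f_3$ and $f_2$ respectively, and invoke invariance of the classical period under mutations. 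Carrying out this identification cleanly would be the only genuinely non-trivial step.
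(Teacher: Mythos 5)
Your argument is correct, and it is a genuinely different (and in fact more self-contained) route than the one the paper takes. The paper's proof only observes that finitely many coefficients agree and then refers the reader to \cite{sigma} and \cite{quantum_periods_3folds}, where $\pi_{f_a}$ and $\widehat{G}_{X_a}$ are shown to satisfy a common linear differential equation; that technique is robust and widely applicable, but it is not carried out in the paper itself. You instead prove the identity coefficient-by-coefficient in closed form, and the bookkeeping checks out: separating $z$ in $f_a = z g_a + z^{-1}$ indeed gives $\pi_{f_a}(t) = \sum_{d \geq 0} \binom{2d}{d}\,\mathrm{coeff}_1(g_a^d)\,t^{2d}$; the factorisations give $\mathrm{coeff}_1(g_2^d) = \sum_l \binom{d}{l}^3$ and $\mathrm{coeff}_1(g_3^d) = \sum_{i+j+k=d}\binom{d}{i,j,k}^2$; and regrouping $G_{X_2}$ (resp.\ $G_{X_3}$) by $d = l+m$ (resp.\ $d = l+m+n$), then multiplying the $t^{2d}$-coefficient by $(2d)! = \binom{2d}{d}(d!)^2$ to pass to $\widehat{G}$, reproduces exactly the same sums, since $\binom{2d}{d}\binom{d}{l}^3 = (2d)!\,\frac{d!}{(l!)^3(m!)^3}$ and $\binom{2d}{d}\binom{d}{i,j,k}^2 = \frac{(2d)!}{(i!)^2(j!)^2(k!)^2}$. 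The upshot is that your computation avoids any appeal to Picard--Fuchs operators and settles the identity completely; what it buys is transparency and self-containment, at the cost of being special to these two factorised Laurent polynomials, whereas the paper's ODE citation points at a method that scales to the whole Fano threefold classification. Your closing speculation about a ``structural'' proof via Givental/quantum Lefschetz plus mutation is a reasonable comment, but it is not needed here: for this proposition the combinatorial identity already closes the argument.
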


\begin{proof}
Set $a=2$ or $a=3$. We need to show that the two power series $\pi_{f_a}$ and $\widehat{G}_{X_a}$ coincide.
By comparing the formulae given in Proposition~\ref{prop:quantum_periods_X2_X3} and at the end of \S\ref{sec:classical_periods}, one can check the equality of finitely many coefficients.
In order to prove the equality of all coefficients, one has to show that $\pi_{f_a}$ and $\widehat{G}_{X_a}$ satisfy the same linear differential equation; this is done in \cite{sigma} and \cite{quantum_periods_3folds}.
\end{proof}

Now we are ready to illustrate Conjecture~\ref{conj} in the example of the projective cone over the smooth del Pezzo surface of degree 6, which is the running example of this note.

In Proposition~\ref{prop:deformations_X} we saw that the Minkowski decomposition \eqref{eq:minkowski_dec_segments} of the hexagonal facet of $P$ into three unitary segments is associated to the smoothing of $X$ to $X_2$.
In \S\ref{sec:laurent_polynomials} we saw that the restriction of $f_2$ to the hexagonal facet of $P$ is reducible and that the Newton polytopes of its three factors are the three unitary segments appearing in the Minkowski decomposition \eqref{eq:minkowski_dec_segments}. By Proposition~\ref{prop:f_23_mirror_X_23} we know that $f_2$ is mirror to $X_2$. This is an instance of Conjecture~\ref{conj}: from the combinatorial input of the Minkowski decomposition of the hexagonal facet of $P$ into three unitary segments we have constructed the smoothing $X_2$ of $X$ and the Laurent polynomial $f_2$ which is mirror to $X_2$.

In a completely analogous manner, we can observe that the Minkowski decomposition \eqref{eq:minkowski_dec_triangles} of the hexagonal facet of $P$ into two triangles induces the smoothing $X_3$ of $X$ and the Laurent polynomial $f_3$. This provides another example for Conjecture~\ref{conj} because $f_3$ is mirror to $X_3$.

As mentioned in \S\ref{sec:general}, given a reflexive polytope $Q$ of dimension 3, from the combinatorial datum given by the choice of a Minkowski decomposition of each facet of $Q$ into $A$-triangles, one constructs an associated Laurent polynomial $f$ supported on $Q$. From the same combinatorial datum on $Q$ (with a slight additional condition which we do not mention here), by \cite{chp} it is possible to construct a smoothing $V$ of the toric Fano threefold $X_Q$ associated to $Q$. It is conjectured that the smooth Fano threefold $V$ is mirror to the Laurent polynomial $f$.

This circle of ideas should be considered as an approach to the problem of classifying smooth Fano varieties of dimension $\geq 4$. Indeed, computers can classify Fano polytopes; therefore, once one has developed a combinatorial technology for smoothing toric Fano varieties, one should be able to construct all smooth Fano varieties which admit a toric degeneration.

There is another difficulty: a smooth Fano variety may have many toric degenerations, hence may arise from several polytopes.
For instance, $X_3 = \PP^1 \times \PP^1 \times \PP^1$ is itself toric and degenerates to the toric Fano $X$. Conjecturally, these many toric degenerations of a smooth Fano correspond to many mirror Laurent polynomials; these Laurent polynomials are related via certain birational transformations of the torus $(\CC^\times)^n$, which are called \emph{mutations} \cite{sigma, cruz_morales_galkin, galkin_usnich} and preserve the classical periods. Therefore, it is conjectured that deformation families of smooth Fano varieties of dimension $n$ are in one-to-one correspondence with mutation-equivalence classes of some ``special'' Laurent polynomials in $n$ variables.
We are not going to expand on this here because otherwise it would lead us far beyond the scope of this note.

\bibliography{Biblio_Roma}


\end{document}